\def\Th1{\varTheta}
\begin{document}

\newtheorem{theorem}{Theorem}
\newtheorem{lemma}[theorem]{Lemma}
\newtheorem{claim}[theorem]{Claim}
\newtheorem{cor}[theorem]{Corollary}
\newtheorem{conj}[theorem]{Conjecture}
\newtheorem{prop}[theorem]{Proposition}
\newtheorem{definition}[theorem]{Definition}
\newtheorem{question}[theorem]{Question}
\newtheorem{example}[theorem]{Example}
\newcommand{\hh}{{{\mathrm h}}}
\newtheorem{remark}[theorem]{Remark}

\numberwithin{equation}{section}
\numberwithin{theorem}{section}
\numberwithin{table}{section}
\numberwithin{figure}{section}

\def\sssum{\mathop{\sum\!\sum\!\sum}}
\def\ssum{\mathop{\sum\ldots \sum}}
\def\iint{\mathop{\int\ldots \int}}

\newcommand{\diam}{\operatorname{diam}}

\def\squareforqed{\hbox{\rlap{$\sqcap$}$\sqcup$}}
\def\qed{\ifmmode\squareforqed\else{\unskip\nobreak\hfil
\penalty50\hskip1em \nobreak\hfil\squareforqed
\parfillskip=0pt\finalhyphendemerits=0\endgraf}\fi}

\newfont{\teneufm}{eufm10}
\newfont{\seveneufm}{eufm7}
\newfont{\fiveeufm}{eufm5}
%
%
\newfam\eufmfam
     \textfont\eufmfam=\teneufm
\scriptfont\eufmfam=\seveneufm
     \scriptscriptfont\eufmfam=\fiveeufm
%
%
\def\frak#1{{\fam\eufmfam\relax#1}}

\newcommand{\bflambda}{{\boldsymbol{\lambda}}}
\newcommand{\bfmu}{{\boldsymbol{\mu}}}
\newcommand{\bfxi}{{\boldsymbol{\eta}}}
\newcommand{\bfrho}{{\boldsymbol{\rho}}}

\def\eps{\varepsilon}

\def\fK{\mathfrak K}
\def\fT{\mathfrak{T}}
\def\fL{\mathfrak L}
\def\fR{\mathfrak R}

\def\fA{{\mathfrak A}}
\def\fB{{\mathfrak B}}
\def\fC{{\mathfrak C}}
\def\fM{{\mathfrak M}}
\def\fS{{\mathfrak  S}}
\def\fU{{\mathfrak U}}
\def\fW{{\mathfrak W}}

\def\T {\mathsf {T}}
\def\Tor{\mathsf{T}_d}
\def\Tore{\widetilde{\mathrm{T}}_{d} }

\def\sM {\mathsf {M}}

\def\ss{\mathsf {s}}

\def\Kmnd{\cK_d(m,n)}
\def\Kmnp{\cK_p(m,n)}
\def\Kmnq{\cK_q(m,n)}

\def \balpha{\bm{\alpha}}
\def \bbeta{\bm{\beta}}
\def \bgamma{\bm{\gamma}}
\def \bdelta{\bm{\delta}}
\def \bzeta{\bm{\zeta}}
\def \blambda{\bm{\lambda}}
\def \bchi{\bm{\chi}}
\def \bphi{\bm{\varphi}}
\def \bpsi{\bm{\psi}}
\def \bnu{\bm{\nu}}
\def \bomega{\bm{\omega}}

\def \bell{\bm{\ell}}

\def\eqref#1{(\ref{#1})}

\def\vec#1{\mathbf{#1}}

\newcommand{\abs}[1]{\left| #1 \right|}

\def\Zq{\mathbb{Z}_q}
\def\Zqx{\mathbb{Z}_q^*}
\def\Zd{\mathbb{Z}_d}
\def\Zdx{\mathbb{Z}_d^*}
\def\Zf{\mathbb{Z}_f}
\def\Zfx{\mathbb{Z}_f^*}
\def\Zp{\mathbb{Z}_p}
\def\Zpx{\mathbb{Z}_p^*}
\def\cM{\mathcal M}
\def\cE{\mathcal E}
\def\cH{\mathcal H}

\def\le{\leqslant}

\def\ge{\geqslant}

\def\sfB{\mathsf {B}}
\def\sfC{\mathsf {C}}
\def\L{\mathsf {L}}
\def\FF{\mathsf {F}}

\def\sE {\mathscr{E}}
\def\sS {\mathscr{S}}

\def\cA{{\mathcal A}}
\def\cB{{\mathcal B}}
\def\cC{{\mathcal C}}
\def\cD{{\mathcal D}}
\def\cE{{\mathcal E}}
\def\cF{{\mathcal F}}
\def\cG{{\mathcal G}}
\def\cH{{\mathcal H}}
\def\cI{{\mathcal I}}
\def\cJ{{\mathcal J}}
\def\cK{{\mathcal K}}
\def\cL{{\mathcal L}}
\def\cM{{\mathcal M}}
\def\cN{{\mathcal N}}
\def\cO{{\mathcal O}}
\def\cP{{\mathcal P}}
\def\cQ{{\mathcal Q}}
\def\cR{{\mathcal R}}
\def\cS{{\mathcal S}}
\def\cT{{\mathcal T}}
\def\cU{{\mathcal U}}
\def\cV{{\mathcal V}}
\def\cW{{\mathcal W}}
\def\cX{{\mathcal X}}
\def\cY{{\mathcal Y}}
\def\cZ{{\mathcal Z}}
\newcommand{\rmod}[1]{\: \mbox{mod} \: #1}

\def\cg{{\mathcal g}}

\def\vy{\mathbf y}
\def\vr{\mathbf r}
\def\vx{\mathbf x}
\def\va{\mathbf a}
\def\vb{\mathbf b}
\def\vc{\mathbf c}
\def\ve{\mathbf e}
\def\vh{\mathbf h}
\def\vk{\mathbf k}
\def\vm{\mathbf m}
\def\vz{\mathbf z}
\def\vu{\mathbf u}
\def\vv{\mathbf v}
\def\vi{\mathbf i}
\def\vj{\mathbf j}

\def\e{{\mathbf{\,e}}}
\def\ep{{\mathbf{\,e}}_p}
\def\eq{{\mathbf{\,e}}_q}

\def\Tr{{\mathrm{Tr}}}
\def\Nm{{\mathrm{Nm}}}

 \def\SS{{\mathbf{S}}}

\def\lcm{{\mathrm{lcm}}}

 \def\0{{\mathbf{0}}}

\def\({\left(}
\def\){\right)}
\def\l|{\left|}
\def\r|{\right|}
\def\fl#1{\left\lfloor#1\right\rfloor}
\def\rf#1{\left\lceil#1\right\rceil}
\def\sumstar#1{\mathop{\sum\vphantom|^{\!\!*}\,}_{#1}}

\def\mand{\qquad \mbox{and} \qquad}

\def\tblue#1{\begin{color}{blue}{{#1}}\end{color}}




\hyphenation{re-pub-lished}

\mathsurround=1pt

\def\bfdefault{b}

\def \F{{\mathbb F}}
\def \K{{\mathbb K}}
\def \N{{\mathbb N}}
\def \Z{{\mathbb Z}}
\def \P{{\mathbb P}}
\def \Q{{\mathbb Q}}
\def \R{{\mathbb R}}
\def \C{{\mathbb C}}
\def\Fp{\F_p}
\def \fp{\Fp^*}

 \def \xbar{\overline x}

\title[Self-similar sets]{Self-similar sets with super-exponential close cylinders}

 \author[C. Chen] {Changhao Chen}

\address{Department of Mathematics, The Chinese University of Hong Kong, Shatin, Hong Kong}
\email{changhao.chenm@gmail.com}

\begin{abstract}  S. Baker (2019), B. B\'ar\'any and A. K\"{a}enm\"{a}ki (2019) independently showed that there exist iterated function systems without exact overlaps and there are super-exponentially close  cylinders at all small levels. We adapt the method of S. Baker and obtain further examples of this type.  We prove that for any algebraic number $\beta\ge 2$ there exist real numbers $s, t$  such that the iterated function system 
$$
\left \{\frac{x}{\beta}, \frac{x+1}{\beta}, \frac{x+s}{\beta}, \frac{x+t}{\beta}\right \}
$$
satisfies the above property.  
\end{abstract}

\keywords{Self-similar sets, exact overlaps, continued fractions}
\subjclass[2010]{28A80, 11J70}

\maketitle


\section{Introduction} 

An iterated function system is a family  $\Phi=\{\varphi_i\}_{i\in \Lambda}$  of finite contraction maps  on $\R^{d}$.  Hutchinson~\cite{Hutchinson}  proved that  there exists a unique non-empty compact set $X\subset \R^{d}$ such that 
$$
X=\bigcup_{i\in \Lambda} \varphi_i(X).
$$
We call $X$ the self-similar set of $\Phi$ if all $\varphi_i$ are similarity maps. One of the fundamental problems in  fractal geometry  is to determine  the Hausdorff dimension of self-similarity sets, see~\cite{Falconer} for the definition and basic properties of Hausdorff dimension.  In this paper we use $\dim E$ to denote the  Hausdorff  dimension of the set $E$.

There are various dimensions for measuring  a given set, and  their values may be different in general, see e.g.~\cite{Falconer, Mattila2015}. However,  Falconer~\cite{Falconer89} proved that self-similar sets always  have equal Hausdorff and box dimensions.  

For any self-similar set $X$, there is a trivial upper bound for $\dim X$. More precisely, let $\Phi=\{r_i x+t_i\}_{i\in \Lambda}$  with $0<|r_i|<1, t_i\in \R^d,  i\in \Lambda$ and $\Lambda$ has finite elements. Let $\dim_S \Phi$ be the unique solution $s$ such that  $\sum_{i\in \Lambda} |r_i|^{s}=1$, then we have,  see \cite[Chapter 9]{Falconer},
\begin{equation}
\label{eq:osc}
\dim X\le \min\{d, \dim_S \Phi\}.
\end{equation}
Hutchinson \cite{Hutchinson} proved that if $\Phi$ satisfies the open set condition then the equality holds in~\eqref{eq:osc}. We remark that there are many other separation conditions which are often added on $\Phi$, see e.g. \cite{LN, Barral-Feng, NW} for more details.

We say  that there is a dimension drop for $\Phi$ if the strict inequality  holds in~\eqref{eq:osc}. 
It is not hard to show that the dimension drop occurs when  $\Phi$ has sufficiently many exact overlaps, see  \cite{Hochman2018}. 

For $\vi=i_1\ldots i_n\in \Lambda^n$ denote $\varphi_{\vi}=\varphi_{i_1} \circ \ldots \circ \varphi_{i_n}$.  We say that $\Phi$ has exact overlaps if there exist $\vi, \vj\in \Lambda^{n}$ with $\vi\neq \vj$ such that 
$
\varphi_\vi=\varphi_\vj.
$
Note that the set $\varphi_{\vi}(X)$ is often called a cylinder of  $\Phi$ or  $X$.
We remark that for $d=1$, so far the exact overlaps are the only known cause for the dimension drop. We have the following conjecture, see \cite{Simon}, and for the recent achievements, see  \cite{Rapaport, Varju}. 

\begin{conj}
\label{conj}
Let $\Phi$ be an IFS consisting of similarity maps  on $\R$. If there is a dimension drop for $\Phi$, then $\Phi$ has exact overlaps. 
\end{conj}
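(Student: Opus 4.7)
The plan is to combine Hochman's entropy-dimension machinery with a forced-overlap argument adapted to the super-exponentially close regime isolated by the present paper. For $\Phi=\{\varphi_i\}_{i\in\Lambda}$ on $\R$, define the minimal cylinder separation
\[
\Delta_n = \min\bigl\{|\varphi_\vi(0)-\varphi_\vj(0)| : \vi, \vj\in\Lambda^n,\ \vi\neq\vj\bigr\}.
\]
Hochman's theorem disposes of the case $-\log\Delta_n=O(n)$ at once: exponential separation together with no exact overlaps forces $\dim X=\min\{1,\dim_S\Phi\}$, so no dimension drop occurs. The entire content of Conjecture~\ref{conj} therefore lies in the super-exponential regime $-\log\Delta_n/n\to\infty$, which is precisely the regime shown to be non-vacuous by S.~Baker and B\'ar\'any--K\"aenm\"aki and further populated by the examples constructed in this paper.

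In that regime I would first try to extract, from sequences of pairs $(\vi_n,\vj_n)$ realising $\Delta_n$, a limiting pair of distinct infinite words $\bomega,\bomega'$ with $\pi(\bomega)=\pi(\bomega')$, where $\pi\colon\Lambda^\N\to X$ is the canonical projection; super-exponential closeness at \emph{all} levels forces long shared prefixes to produce composed similarities that are geometrically indistinguishable. The next step would be to convert these near-coincidences into an exact algebraic relation among the contraction ratios $r_i$ and translations $t_i$, along the lines of Varj\'u's and Rapaport's arguments where tight approximations of self-similar measures are parlayed into genuine algebraic identities. If such a relation can be forced, it should yield $\varphi_\vi=\varphi_\vj$ for some pair of equal-length words, contradicting the no-exact-overlap hypothesis.

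The main obstacle, and the reason the conjecture remains open, is precisely what the examples in this paper exploit: super-exponential closeness can persist at \emph{every} small level without ever producing an honest coincidence, so compactness yields only an asymptotic overlap, not an exact one. Without an additional tool converting asymptotic to exact coincidence, the pigeonhole step fails. Overcoming this would apparently require either a quantitative strengthening of Hochman's bound that removes the exponential separation hypothesis, or a structural classification of pathological IFSs of the kind built below, showing that each of them nevertheless satisfies $\dim X=\min\{1,\dim_S\Phi\}$. At present no technique is known that can deliver a matching lower bound on $\dim X$ in this regime without an algebraic input beyond the no-exact-overlap hypothesis, and any honest attempt to prove Conjecture~\ref{conj} would need to supply exactly this missing ingredient.
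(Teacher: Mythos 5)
This statement is Conjecture~\ref{conj}; the paper does not prove it and offers no proof to compare against --- it is an open problem, with only partial progress in the cited works of Rapaport and Varj\'u. Your text is accordingly not a proof but a strategy sketch, and you concede as much in your final paragraph. That concession is correct, but it means the proposal cannot be accepted as a proof of the statement.

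The concrete gap is worth naming precisely, because your proposed route is refuted by the very paper you are writing into. Your plan factors the conjecture through the quantity $\Delta_n$: dimension drop implies $-\log\Delta_n/n\to\infty$ (Hochman, correct), and then you hope to upgrade super-exponential closeness of cylinders to an exact overlap via a compactness/limiting-word argument followed by an ``algebraic rigidity'' step. But the second implication is false: Theorem~\ref{thm:main} of this paper (and the earlier examples of Baker and of B\'ar\'any--K\"aenm\"aki) exhibits IFSs with $\Delta_n\le\varepsilon_n$ for an arbitrarily fast-decaying sequence $\{\varepsilon_n\}$ and yet with no exact overlaps. So no argument that uses only the smallness of $\Delta_n$ as its hypothesis can ever conclude exact overlaps; any genuine attack on Conjecture~\ref{conj} must use the dimension drop itself (e.g.\ via entropy of the associated self-similar measures), not merely the closeness of cylinders that the dimension drop implies. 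Separately, your compactness step is weaker than you suggest: extracting distinct infinite words $\bomega\neq\bomega'$ with $\pi(\bomega)=\pi(\bomega')$ only produces a single coincident point of the attractor, which happens for essentially every overlapping IFS and carries no information about coincidence of the finite similarity maps $\varphi_{\vi}$ and $\varphi_{\vj}$ themselves.
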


Hochman \cite{Hochman} proved that if there is a dimension drop for $\Phi$ then there are super-exponentially close cylinders. To be precise,  following Hochman \cite{Hochman2018} we use the following notation to quantify the amount of overlaps. Define a distance $d(\cdot,\cdot)$ between any two similarities $\varphi(x)=ax+b$ and $\varphi'(x)=a'x+b'$ by 
$$
d(\varphi, \varphi')=|b-b'|+|\log |a|-\log |a'||.
$$
For a given  IFS $\Phi=\{\varphi_i\}_{i\in \Lambda}$ and $n\in \N$ denote 
\begin{equation}
\label{eq:D_n-general}
\Delta_n=\min\{d(\varphi_{\vi}, \varphi_{\vj}): \vi, \vj \in \Lambda^{n}, \vi\neq \vj\}.
\end{equation}

Note that there are exact overlaps if and only if  $\Delta_n=0$ for some  $n$. Moreover for any IFS $\Phi$ there exists $0<c<1$ such that $\Delta_n\le c^n$ for all large enough $n$. Clearly there is an  IFS $\Phi$ such that $\Delta_n\ge c^n$ for some $c>0$ and all $n\in \N$, for instance the generating IFS $\{\frac{1}{3}x, \frac{1}{3}x+\frac{2}{3}\}$ of the Cantor ternary set. 

For an IFS $\Phi$ on $\R$ Hochman \cite{Hochman} proved that if 
$$
\dim X<\min\{1, \dim_s \Phi\},
$$
 then for any $c>0$ there exists $N$ such that for all $n\ge N$ one has 
\begin{equation}
\label{eq:super-close}
\Delta_n\le c^n.
\end{equation}
We remark that this leads to many applications in the dimension theory of self-similar sets, see \cite{Hochman} for more backgrounds and the corresponding applications. 

By the above result of Hochman and  Conjecture~\ref{conj}, it is natural to ask  whether an IFS $\Phi$ on $\R$ such that the estimate~\eqref{eq:super-close} holds (for any $c>0$ and all large enough $n$) would imply the IFS $\Phi$ has exact overlaps, see  \cite{Hochman2018}. For this direction,  Baker \cite{Baker}, B\'ar\'any and K\"{a}enm\"{a}ki \cite{BK} independently showed that there exists an IFS $\Phi$ without exact overlaps and there are super-exponentially close  cylinders at all small levels. More precisely, for any positive sequence $\{\varepsilon_n\}$ Baker \cite{Baker}, B\'ar\'any and K\"{a}enm\"{a}ki \cite{BK} showed that there exists an IFS $\Phi$ without exact overlaps such that 
\begin{equation}
\label{eq:desired-Delta_n}
\Delta_n\le \varepsilon_n, \quad \forall n\in \N.
\end{equation}

In this paper, by adapting  the method  of Baker \cite{Baker} we  obtain further examples of this type.  The IFS of  Baker  is the form 
\begin{equation} \label{eq:Baker-form}
\begin{aligned}
\Phi_{s, t}=\biggl\{&\varphi_1(x)=\frac{x}{2}, \varphi_2(x)=\frac{x+1}{2},  \varphi_3(x)=\frac{x+s}{2},\\
&\varphi_4(x)=\frac{x+t}{2}, \varphi_5(x)=\frac{x+1+s}{2}, \varphi_6(x)=\frac{x+1+t}{2} \biggr \}.
\end{aligned}
\end{equation}
Precisely, Baker \cite{Baker}  showed that for any positive sequence $\{\varepsilon_n\}$  there exists $s, t\in \R$, such that the IFS $\Phi_{s, t}$ satisfies \eqref{eq:desired-Delta_n}.  Roughly speaking,  Baker's arguments  can be divided into three steps. Firstly, for the above IFS $\Phi_{s, t}$, there is a ``nice" upper bound for $\Delta_n$ depending on the Diophantine properties of $s$ and $t$ (i.e., whether the numbers $s$ and $t$ can be approximately well by rational numbers).  Secondly,  using  the properties of continued fractions (with integer elements), Baker \cite{Baker} construct two numbers $s, t\in \R$ such that for the IFS $\Phi_{s, t}$ of \eqref{eq:Baker-form}  one has $\Delta_n\le \varepsilon_n$ for all $n\in \N$. Thirdly, show that the IFS $\Phi_{s, t}$  satisfies the desired property.

Baker \cite[Remark 2.2]{Baker} also showed that if we take the fraction $1/8$ instead of $1/2$ in \eqref{eq:Baker-form}, we can also find $s, t\in \R$ such that the IFS $\Phi_{s, t}$ satisfies \eqref{eq:desired-Delta_n}. Moreover,  \cite[Remark 2.2]{Baker}  implies that we could take any other natural numbers ($\ge 2$)  instead of $1/2$ in \eqref{eq:Baker-form} to obtain the desired IFS. Indeed this is our start point of this paper, by adapting the method of  Baker \cite{Baker} and mimicking his argument, we can take algebraic number instead of $1/2$ in  \eqref{eq:Baker-form} to obtain the desired IFS.  We also use the above three steps of Baker, however for the non-integer case $\beta$ we will use the continued fractions with non-integer elements which is the main difference from \cite{Baker}.  

Our contribution is that, except the previous  examples of Baker \cite{Baker}, B\'ar\'any and K\"{a}enm\"{a}ki \cite{BK},  we provide further IFS  without overlaps and there are super-exponentially close cylinders at all small levels.


 



 We consider the  following variant  IFS of \eqref{eq:Baker-form},
$$
\Phi_{\beta, s, t}=\left \{\frac{x}{\beta}, \frac{x+1}{\beta}, \frac{x+s}{\beta}, \frac{x+t}{\beta}\right \}.
$$
For convenience we write 
$
\Phi_{\beta, s, t}=\{\varphi_i\}_{i\in \Lambda} 
$
where $\Lambda=\{1, 2, 3, 4\}$.  Moreover,  with respect to \eqref{eq:D_n-general}, for $n\in\N$ denote
\begin{equation}
\label{eq:D_n-homo}
\Delta_n(\beta, s, t)=\min\{|\varphi_\vi(0)-\varphi_\vj(0)|: \vi, \vj\in \Lambda^n, \vi\neq \vj\}.
\end{equation}


\begin{theorem}
\label{thm:main}
Let  $\beta\ge 2$ be an algebraic number. Then  for any positive sequence $\{\varepsilon_n\}$ there exists  $\Phi_{\beta, s,t}$  without  exact  overlaps and 
$$
\Delta_n(\beta, s, t)\le \varepsilon_n, \quad \forall n\in \N.
$$
\end{theorem}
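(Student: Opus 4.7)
The proof plan follows the three-step strategy of Baker, now with $\beta\ge 2$ an arbitrary algebraic number in place of $\beta=2$. The governing identity is
\[
\varphi_\vi(0)-\varphi_\vj(0)=\sum_{k=1}^{n}(a_{i_k}-a_{j_k})\beta^{-k},\qquad a_1=0,\ a_2=1,\ a_3=s,\ a_4=t,
\]
so every difference of cylinder endpoints is a $\Z$-linear combination of $1,s,t$ whose coefficient on each power $\beta^{-k}$ has absolute value at most a fixed constant.

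First I would carry out the Diophantine reduction. Comparing $\vi=(3,1,\ldots,1)$ with an arbitrary word $\vj=(j_1,\ldots,j_n)\in\{1,2\}^n$ yields $\varphi_\vi(0)-\varphi_\vj(0)=\beta^{-1}s-\sum_{k=1}^{n}\epsilon_k\beta^{-k}$ with $\epsilon_k\in\{0,1\}$, whence
\[
\Delta_n(\beta,s,t)\le\Bigl|\,s-\beta\sum_{k=1}^{n}\epsilon_k\beta^{-k}\,\Bigr|.
\]
Thus good approximation of $s$ by ``$\beta$-binary truncations'' of length $n$ translates into small $\Delta_n$; an analogous pair using letter $4$ in place of $3$ yields the same reduction for $t$. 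The task therefore reduces to constructing $s$ (and $t$) with very good approximations in $\Z[\beta^{-1}]$.

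Second, I would build $s$ (and, symmetrically, $t$) as the limit of a continued-fraction-type expansion whose partial quotients lie in $\Z[\beta]\cap[2,\infty)$ rather than in $\Z_{\ge 1}$. In Baker's integer case the ordinary continued fraction with rapidly growing partial quotients already produces convergents in $\Z[\beta^{-1}]$ that beat any prescribed $\varepsilon_n$; for general algebraic $\beta$ I would instead take partial quotients of the form $N_j\beta^{m_j}$ with $N_j\in\N$ and $m_j\in\Z_{\ge 0}$, chosen inductively. The three-term recurrence and the approximation bound $|s-p_n/q_n|\le (q_nq_{n+1})^{-1}$ persist in this generalized setting, and by making each successive partial quotient huge one forces the right-hand side below any preassigned level, ultimately yielding $\Delta_n(\beta,s,t)\le\varepsilon_n$ for all $n$.

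Third, I would eliminate exact overlaps. An identity $\varphi_\vi=\varphi_\vj$ with $\vi\ne\vj$ of common length is equivalent to a nontrivial $\Q(\beta)$-linear relation among $1,s,t$ with explicit, bounded coefficients in $\Z[\beta,\beta^{-1}]$; there are only countably many such candidate relations. Since at each stage of the continued-fraction construction there are infinitely many admissible choices for the next partial quotient (any sufficiently large $N_j\beta^{m_j}$ preserves the current approximation estimates), a diagonal enumeration of the overlap equations allows me to avoid every one of them while still securing the super-exponential approximation at every level.

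The hard part will be step two: verifying that the $\Z[\beta]$-valued generalized continued fraction converges in $\R$ to an irrational limit and obeys the classical approximation estimates. In the integer case these facts come for free from positivity and monotonicity of the partial quotients; in the algebraic case one must check that the denominators $q_n$ genuinely grow (do not accidentally cancel through Galois conjugation), that $s$ and $t$ can be chosen jointly so that the Diophantine hypothesis holds for both simultaneously, and that the construction retains enough flexibility for the diagonal avoidance of the countable family of overlap relations. Once this generalized continued-fraction machinery is established, the remaining combinatorics parallels Baker's argument.
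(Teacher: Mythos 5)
There is a genuine gap, and it sits at the interface between your steps one and two. Your Diophantine reduction compares a word containing the letter $3$ exactly once against words in $\{1,2\}^n$, so the coefficient of $s$ in the resulting difference is a single power of $\beta$: it requires $s$ to be super-exponentially well approximated by numbers $p/\beta^{j}$ with $p\in\cB_n$. For algebraic $\beta$ this restricted form is self-defeating: if $|s-P_n|\le\varepsilon_n$ holds for consecutive $n$ with approximants of this shape, then $\beta^{n}(P_n-P_{n+1})$ is an element of $\cP(n+1,1)$, hence (by the Garsia--Mahler bound quoted below) either zero or of absolute value at least $M^{-n-1}$; for super-exponentially small $\varepsilon_n$ the approximants must stabilize, giving $s=P_N$ exactly, which is precisely an exact overlap. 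The reduction actually needed places the letter $3$ at several positions, yielding $\Delta_n\le\min\{|qs-p|:p,q\in\cB_n,\ (p,q)\neq(0,0)\}$ with a general denominator $q$ that is a $\{0,1\}$-coefficient polynomial in $\beta$. This in turn breaks your step two as written: partial quotients $N_j\beta^{m_j}$ with arbitrary $N_j\in\N$ produce convergents $p_k,q_k$ whose coefficients are products of the $N_j$ and therefore do not lie in $\cB_n$, so even the corrected reduction cannot see them. The partial quotients must be pure powers of $\beta$, with exponents growing fast enough that the recurrence $q_k=a_kq_{k-1}+q_{k-2}$ never creates a coefficient outside $\{0,1\}$; this is what ties the convergents to the cylinder structure.

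The second missing idea is that algebraicity of $\beta$ does no work anywhere in your proposal, yet it is the engine of the argument. The key tool is the separation bound of Garsia--Mahler type: for algebraic $\xi$ there is $M$ such that every $f\in\cP(n,H)$ with $f(\xi)\neq0$ satisfies $|f(\xi)|\ge M^{-n}H^{-M}$. It is needed (i) to show the generalized continued fractions converge to limits outside $\Q_\beta$ --- growth of the $q_k$ is not the issue; one must rule out $f(\beta)q_k-g(\beta)p_k$ being nonzero yet smaller than $1/q_{k+1}$; (ii) to reduce an exact overlap to a single relation $s=\frac{f(\beta)}{g(\beta)}t+\frac{h(\beta)}{g(\beta)}$ genuinely involving both parameters, which uses $s,t\notin\Q_\beta$; and (iii) to make your ``diagonal avoidance'' quantitative. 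On (iii): each overlap relation involves the not-yet-determined $t$, so at stage $k$ you must keep $s$ a uniform distance $c_k$ from the whole family $\frac{f(\beta)}{g(\beta)}\frac{p_k'}{q_k'}+\frac{h(\beta)}{g(\beta)}$ over all $f,g,h\in\cP(k,k)$, while simultaneously forcing $|t-p_k'/q_k'|\le c_k/k$; the uniform gap $c_k$ comes from applying the separation bound to $p_kg(\beta)-q_kf(\beta)$. A qualitative ``countably many relations, avoid them one at a time'' does not obviously close, because the point to be avoided moves as $t$ is refined. With these two repairs --- the two-sided reduction $\Delta_n\le|qs-p|$ with $p,q\in\cB_n$, and systematic use of the separation bound --- your outline becomes the paper's proof.
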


It seems that  our methods may not work for the case when  $\beta$ is a transcendental number.  The argument of B\'ar\'any and K\"{a}enm\"{a}ki \cite{BK}  may shed some new light to this situation.

We remark that (in our setting) for any algebraic $\beta\ge 2$ and the self-similar set $X$ of the IFS $\Phi_{\beta, s,t}$ of Theorem \ref{thm:main},  Rapaport \cite{Rapaport} showed that 
$$
\dim X=\min \left\{1, \frac{\log 4}{\log \beta} \right\}.
$$

We note that there are self-similar measures which are  related  to self-similar sets. The conjecture \ref{conj} can also be formulated to the self-similar measures as well, see \cite{Baker, Hochman2018, Rapaport}.

\section{Preparation}

\subsection{Notation}

Let $\beta\ge 2$ be a fixed algebraic number throughout the paper.  Denote 
\begin{equation*}
\P_\beta=\{\beta^n: n=0,1, 2, \ldots\};
\end{equation*}
\begin{equation*}
\Z_\beta=\{f(\beta): f\in \Z[x]\};
\end{equation*}
\begin{equation*}
\Q_\beta= \{f/g: f, g\in \Z_\beta, g \neq 0 \}.
\end{equation*}

For a number $f(\beta)\in \Z_\beta$ we sometimes regard $f(\beta)$ as a polynomial with the indeterminate $\beta$ when there is no confusion. Thus the degree of $f(\beta)$ is understood as the degree of $f\in \Z[x]$.

\subsection{Iterated function system $\Phi_{\beta, s, t}$}

We first introduce the following ``$\beta$-based'' sets.

\begin{equation}
\label{eq:Base-beta}
\cB_n=\left \{\sum_{j=1}^{n}\omega_j\beta^{j-1}: \omega_j\in \{0, 1\}\right \} \quad \text{and}  \quad \cB=\bigcup_{n=1}^{\infty} \cB_n.
\end{equation}

Note that by the restriction $\beta\ge 2$  any element in $\cB$ has an unique  representation. Indeed  this follows by the fact 
that for any  $k\ge 1$ we have 
$$
\beta^k>\beta^{k-1}+\ldots+1.
$$
We remark that this is 
the reason of letting $\beta\ge 2$ in Theorem~\ref{thm:main}.

In analogy of  Baker \cite[Lemma 2.1]{Baker} we have the following upper bound for $\Delta_n(\beta, s, t)$.

\begin{lemma} 
\label{lem:Delta} For IFS $\Phi_{\beta, s, t}$ and  $n\in \N$ we have 
$$
\Delta_n(\beta, s, t)\le \min\left \{ \min_{ \substack{p, q\in \cB_n \\ (p, q)\neq (0, 0)}} |qs-p|, \min_{\substack{p, q\in\cB_n \\ (p, q)\neq (0, 0)}}|qt-p|  \right \}.
$$
\end{lemma}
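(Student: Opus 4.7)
The plan is to prove both bounds by constructing, for any $(p, q) \in \cB_n \times \cB_n$ with $(p, q) \neq (0, 0)$, distinct words $\vi, \vj \in \Lambda^n$ whose cylinders satisfy $|\varphi_\vi(0) - \varphi_\vj(0)| \le |qs - p|$ (and a symmetric construction delivers the $t$-bound).

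First I would set $a_1 = 0$, $a_2 = 1$, $a_3 = s$, $a_4 = t$, so that $\varphi_i(x) = (x + a_i)/\beta$. Iterating the contractions gives $\varphi_\vi(0) = \sum_{j=1}^n a_{i_j}\beta^{-j}$, and after reindexing $k = n+1-j$,
$$\beta^n \varphi_\vi(0) = \sum_{k=1}^n b_k \beta^{k-1}, \qquad b_k = a_{i_{n+1-k}} \in \{0, 1, s, t\}.$$
Thus $\beta^n(\varphi_\vi(0) - \varphi_\vj(0)) = \sum_{k=1}^n c_k \beta^{k-1}$, where each coefficient $c_k = b_k - b_k'$ can be chosen independently from the difference set $\{0,1,s,t\}-\{0,1,s,t\}$ by an appropriate choice of symbol at position $n+1-k$.

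The core step is as follows. Since $\beta \ge 2$, each element of $\cB_n$ has a unique base-$\beta$ expansion with digits in $\{0, 1\}$, so I would write $p = \sum p_k \beta^{k-1}$ and $q = \sum q_k \beta^{k-1}$, and for each position $k$ match the target coefficient $c_k = s q_k - p_k$ by a pair $(b_k, b_k') \in \{0, 1, s\}^2$ chosen according to the four patterns $(p_k, q_k) \in \{0,1\}^2$: namely $(0, 0)$, $(0, 1)$, $(s, 0)$, $(s, 1)$, respectively. Reading off $\vi, \vj \in \Lambda^n$ from these values,
$$\beta^n(\varphi_\vi(0) - \varphi_\vj(0)) = \sum_{k=1}^n(sq_k - p_k)\beta^{k-1} = qs - p,$$
whence $|\varphi_\vi(0) - \varphi_\vj(0)| = |qs-p|/\beta^n \le |qs - p|$ since $\beta \ge 2$.

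It then remains to verify $\vi \neq \vj$. The case table shows $b_k \neq b_k'$ at every $k$ with $(p_k, q_k) \neq (0, 0)$ as long as $0, 1, s$ are pairwise distinct, so the hypothesis $(p, q) \neq (0, 0)$ forces $\vi \neq \vj$. In the degenerate cases $s \in \{0, 1\}$ the IFS $\Phi_{\beta, s, t}$ already has an exact overlap, so $\Delta_n = 0$, and the right-hand side vanishes simultaneously (taking $(p, q) = (0, 1)$ or $(1, 1)$), so the inequality is trivial. The second bound is obtained by repeating the whole argument with $s$ replaced by $t$ and the triple $\{0, 1, s\}$ replaced by $\{0, 1, t\}$. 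I do not anticipate any genuine obstacle: the content is the elementary observation that the four target coefficients $s q_k - p_k$ all lie inside $\{0, 1, s\} - \{0, 1, s\}$, combined with uniqueness of base-$\beta$ digit expansions on $\cB_n$, which is exactly what $\beta \ge 2$ buys us.
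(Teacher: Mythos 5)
Your proposal is correct and follows essentially the same route as the paper: the paper writes $\{\varphi_{\vi}(0):\vi\in\Lambda^n\}=\beta^{-n}(A+\beta A+\cdots+\beta^{n-1}A)$ with $A=\{0,1,s,t\}$ and observes $\cB_n\cup s\cB_n\cup t\cB_n\subseteq A+\beta A+\cdots+\beta^{n-1}A$, which is exactly your digit-by-digit matching of $p$ and $sq$ (resp.\ $tq$) by words $\vj$ and $\vi$, made explicit. Your extra care about distinctness of the words and the degenerate cases $s\in\{0,1\}$ is a point the paper glosses over, but it does not change the argument.
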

\begin{proof}
Observe that 
\begin{align*}
\{\varphi_\vi(0): \vi\in \Lambda^n\}= \left \{\sum_{j=1}^n c_j \beta^{-j+1}: c_j\in \{0, 1/\beta, s/\beta, t/ \beta\}\right \}.
\end{align*}

Recall that the arithmetic sums of sets $X, Y \subseteq \R$ is defined as  
$$
X+Y=\{x+y: x\in X, y\in Y\}.
$$
Moreover for $\rho\in \R$ denote 
$
\rho X=\{\rho x: x\in X\}.
$
Let $A=\{0, 1, s, t\}$ then 
\begin{equation}
\label{eq:phi0}
\{\varphi_\vi(0): \vi\in \Lambda^n\}
=\beta^{-n}(A+\beta A+\ldots +\beta^{n-1} A).
\end{equation}
Applying~\eqref{eq:Base-beta}  we derive 
\begin{equation*}
\cB_n \cup s\cB_n \cup t\cB_n \subseteq A+\beta A+\ldots +\beta^{n-1} A.
\end{equation*}
Combining with \eqref{eq:phi0} and \eqref{eq:D_n-homo} we obtain the desired bound.
\end{proof}

\begin{lemma} 
\label{lem:non}
Let $\beta\ge 2$ and  $s, t \notin \Q_\beta$. Suppose  the IFS $\Phi_{\beta, s, t}$ has exact overlaps then there exit $f(\beta), g(\beta) \in \Z_\beta \setminus\{0\}$ and $h(\beta)\in \Z_\beta$ such that 
$$
s=\frac{f(\beta)}{g(\beta)}t+\frac{h(\beta)}{g(\beta)}.
$$
\end{lemma}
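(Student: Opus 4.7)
The plan is to mimic the calculation inside Lemma~\ref{lem:Delta}, but now keeping track of the full translation part of $\varphi_{\vi}$ (rather than just bounding it). For $\vi = (i_1, \ldots, i_n) \in \Lambda^n$ the map $\varphi_\vi$ is a similarity of ratio $\beta^{-n}$, and its translation part is an $\R$-linear combination of $1$, $s$, $t$ whose coefficients record which positions of $\vi$ equal $2$, $3$, and $4$ respectively. Concretely, I would set
$$
p_\vi = \sum_{j:\, i_j=2}\beta^{j-1}, \qquad q_\vi = \sum_{j:\, i_j=3}\beta^{j-1}, \qquad r_\vi = \sum_{j:\, i_j=4}\beta^{j-1},
$$
each of which lies in $\cB_n \subset \Z_\beta$, and observe that $\varphi_\vi(0) = \beta^{-n}(p_\vi + s\, q_\vi + t\, r_\vi)$.

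An exact overlap $\varphi_\vi = \varphi_\vj$ with $\vi \ne \vj$ in $\Lambda^n$ then cancels the linear coefficient $\beta^{-n}$ and becomes the single identity $p_\vi + q_\vi s + r_\vi t = p_\vj + q_\vj s + r_\vj t$. Setting $P = p_\vj - p_\vi$, $Q = q_\vi - q_\vj$, $R = r_\vi - r_\vj$, all three of which lie in $\Z_\beta$, the overlap relation rewrites as
$$
Qs + Rt = P.
$$

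The next step is to exclude the degenerate cases using the standing hypothesis $s, t \notin \Q_\beta$ together with the unique representation property of $\cB_n$ noted after~\eqref{eq:Base-beta}. If $R = 0$ and $Q \ne 0$, then $s = P/Q \in \Q_\beta$, which is a contradiction; by symmetry, if $Q = 0$ and $R \ne 0$, then $t \in \Q_\beta$, also impossible. In the remaining case $Q = R = 0$, the equation forces $P = 0$ as well, so $p_\vi = p_\vj$, $q_\vi = q_\vj$ and $r_\vi = r_\vj$; but since $\beta \ge 2$ each element of $\cB_n$ admits a unique base-$\beta$ expansion, so the positions of $\vi$ equal to each of $2, 3, 4$ coincide with those of $\vj$, and hence (the remaining positions being the $1$'s) $\vi = \vj$, contradicting $\vi \ne \vj$.

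Thus both $Q$ and $R$ are nonzero elements of $\Z_\beta$, and solving $Qs + Rt = P$ for $s$ yields the desired expression
$$
s = \frac{-R}{Q}\,t + \frac{P}{Q}
$$
with $f(\beta) = -R$, $g(\beta) = Q \in \Z_\beta \setminus \{0\}$ and $h(\beta) = P \in \Z_\beta$. The whole argument is essentially bookkeeping plus the uniqueness of representations in $\cB_n$; I do not expect a serious obstacle, the only care needed being the small case analysis that distinguishes which of $Q, R$ are forced to be nonzero by which of the two irrationality assumptions $s, t \notin \Q_\beta$.
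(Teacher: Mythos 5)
Your proposal is correct and follows essentially the same route as the paper: extract the relation $L_1+L_2s+L_3t=L_1'+L_2's+L_3't$ with coefficients in $\cB_n$, use $s,t\notin\Q_\beta$ to rule out exactly one of the $s$- or $t$-coefficients differing, and use the uniqueness of representations in $\cB_n$ (from $\beta\ge 2$) to rule out all coefficients agreeing, then solve for $s$. The only cosmetic difference is that you fold the uniqueness argument into the case $Q=R=0$, whereas the paper invokes it up front to conclude $(L_2,L_3)\neq(L_2',L_3')$; the content is identical.
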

\begin{proof}
Since $\Phi_{\beta, s, t}$ has exact overlaps, there is $n\in \N$ and $\vi, \vj\in \Lambda^n, \vi\neq \vj$ such that $\varphi_\vi(0)=\varphi_\vj(0)$. Applying \eqref{eq:phi0} and $\vi\neq \vj$, there exist $a_i, a_i' \in \{0, 1, s, t\}, 1\le  i\le  n $  such that $a_{i_0}\neq a_{i_0}'$ for some $1\le i_0\le n$ and 
\begin{equation*}
\sum_{i=1}^n a_i \beta^{i-1} =\sum_{i=1}^{n} a_i' \beta^{i-1}.
\end{equation*}
It follows that there are  $L_i, L_i' \in \cB_n, i=1, 2, 3 $ such that
\begin{equation}
\label{eq:L1L2L3}
L_1+ L_2s+L_3t=L_1'+L_2's+L_3't.
\end{equation}
Furthermore, since each element of $\cB_n$ has an unique representation, we conclude  that 
\begin{equation*}
(L_1, L_2, L_3) \neq (L_1', L_2', L_3').
\end{equation*} 
Combining with \eqref{eq:L1L2L3} we derive 
$
(L_2, L_3)\neq (L_2', L_3').
$
We claim that $L_2\neq L_2'$ and $L_3\neq L_3'$. Indeed assume to the contrary that   $L_2=L_2'$, then $L_3\neq L_3'$ and hence 
$$
t=\frac{L_1-L_1'}{L_3'-L_3} \in \Q_\beta,
$$
which is contradict to the assumption that $t\notin \Q_\beta$. Similar argument yields $L_3\neq L_3'$. 
 Thus  the above claim is true. Combining with \eqref{eq:L1L2L3}  we obtain the desired identity. 
\end{proof}

\subsection{Values of polynomials on algebraic numbers}

Garsia \cite[Lemma 1.51]{Garsia} first applied the  estimates of polynomials with integer  coefficients on algebraic numbers to the theory of Bernoulli convolution, since then this method and its variants have lead to many applications in fractal geometry, see e.g.\cite[Section 5]{Feng-Pisot}, \cite[Theorem 1.5]{Hochman},~\cite[Section 6]{LN}, ~\cite{Rapaport, Varju}.  

The following form is taking from~\cite[Lemma 11]{Rapaport}. Denote by $\cP(n, H)$ the collection of   integer coefficient polynomials with degree at most $n$ and  its  coefficients are bounded by $H$.

\begin{lemma}
\label{lem:zero-or-1}
For any  algebraic number $\xi$ there exists  $M>0$ depending only on $\xi$   such that for any $f\in \cP(n, H)$ if  $f(\xi)\neq 0$ then
$$
|f(\xi)|\ge M^{-n} H^{-M}.
$$
\end{lemma}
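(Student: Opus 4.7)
The plan is to apply the standard Liouville-type bound for values of integer polynomials at algebraic numbers, using the field norm over $\Q(\xi)/\Q$. Let $p(x) = a_0 x^D + \cdots + a_D \in \Z[x]$ be a minimal polynomial of $\xi$, with leading coefficient $a_0 \neq 0$ and degree $D$, and let $\xi = \xi_1, \xi_2, \ldots, \xi_D \in \C$ be its Galois conjugates. All constants arising in the proof will depend only on $a_0$, $D$, and $\max_j |\xi_j|$, i.e.\ only on $\xi$.

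The first step is to arrange that a suitable scalar multiple of $f(\xi)$ is an algebraic integer. Since $a_0 \xi$ is an algebraic integer, so is $a_0^n \xi^i = a_0^{n-i}(a_0 \xi)^i$ for every $0 \le i \le n$. Hence for any $f \in \cP(n,H)$ the number $a_0^n f(\xi)$ lies in the ring of integers $\mathcal{O}_{\Q(\xi)}$. Assuming $f(\xi) \neq 0$, every Galois image $a_0^n f(\xi_j)$ is nonzero as well, so the norm
$$
N = \prod_{j=1}^{D} a_0^n f(\xi_j) \in \Z
$$
is a nonzero rational integer, giving $|N| \ge 1$.

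The second step is to bound the remaining conjugates from above. For each $j$, the triangle inequality yields
$$
|a_0^n f(\xi_j)| \le a_0^n (n+1) H \max(1, |\xi_j|)^n \le C_1^n H,
$$
where $C_1 = C_1(\xi) > 0$ absorbs $a_0$, $\max_j\max(1,|\xi_j|)$, and the polynomial factor $n+1$ into an exponential. Taking the product over $j = 2, \ldots, D$ produces
$$
\prod_{j \neq 1} |a_0^n f(\xi_j)| \le C_1^{n(D-1)} H^{D-1}.
$$

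Combining $|N| \ge 1$ with this upper bound gives $|a_0^n f(\xi)| \ge C_1^{-n(D-1)} H^{-(D-1)}$, and dividing by $a_0^n$ produces
$$
|f(\xi)| \ge \bigl(a_0 C_1^{D-1}\bigr)^{-n} H^{-(D-1)}.
$$
Choosing $M$ larger than both $a_0 C_1^{D-1}$ and $D-1$ yields the inequality in the statement. I do not expect any serious obstacle; the only mild care needed is to carry the factor $a_0^n$ throughout so that the norm remains a rational integer, which is the reason we cannot simply work with $f(\xi)$ itself when $\xi$ is not an algebraic integer.
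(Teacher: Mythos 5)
Your proof is correct: the norm/resultant argument (clear denominators with $a_0^n$ so that $a_0^n f(\xi)$ is an algebraic integer, note the Galois-invariant product is a nonzero rational integer, and bound the remaining conjugates by $C_1^n H$ each) is exactly the standard Liouville-type proof of this lemma, which the paper does not prove but simply imports from Rapaport's Lemma 11 (and which goes back to Garsia's Lemma 1.51). The only cosmetic point is that you should write $|a_0|^n$ rather than $a_0^n$, or fix the sign of the minimal polynomial at the outset; otherwise the argument, including the reduction of the factor $n+1$ into the exponential constant and the final choice $M\ge\max\{|a_0|C_1^{D-1},\,D-1\}$, is complete.
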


We remark that one may obtain better lower bounds for some special algebraic numbers, for instance when $\beta$ is a Pisot number or Salem number, see \cite[Lemma 1.51]{Garsia}, \cite[Section 5]{Feng-Pisot},~\cite[Section 6]{LN}.

\subsection{Continued fractions with non-integer letters}
 In this subsection we study the continued  fractions which  its letters may not be positive integers.  Our main results of this subsection are Lemma~\ref{lem:irrational} and Lemma~\ref{lem:best}.  We start from recalling some well known facts from continued fractions, see  \cite{Khinchin} for more details.  An expression of the form
\begin{equation}
\label{eq:form-finite}
\cfrac{1}{a_1+\cfrac{1}{a_2+\cfrac{1}{ a_3+\cdots \cfrac{1}{a_k}}}}
\end{equation}
is called a finite continued fraction, and  for convenience we write it as    
$$
[a_1, a_2, \ldots, a_k].
$$

In general applications of continued fractions the letters $a_1, a_2, \ldots, a_k$ are often assumed to be  positive integers. However, for our applications we always assume  $a_1, a_2, \ldots, a_k$  to be elements of  $\P_\beta$, and thus in general  the elements $a_1, a_2, \ldots, a_k$ may not be natural numbers. 

Given a  real number sequence  $\{a_n\}$ with  $a_n\ge 1$, denote $\frac{p_k}{q_k}$ the value of~\eqref{eq:form-finite}, i.e., 
$$
\frac{p_k}{q_k}=\cfrac{1}{a_1+\cfrac{1}{a_2+\cfrac{1}{ a_3+\cdots \cfrac{1}{a_k}}}}.
$$
We remark that $p_k$ and $q_k$ are the ``canonical representations'' of the finite continued fractions, see \cite[Chapter 1]{Khinchin}.

In the following we collect some useful facts for the sequence $\{\frac{p_n}{q_n}\}$ under the condition $a_n\ge 1$ for all $n\in \N$. 
\begin{itemize}
\item  For convenience denote  $p_0=0, q_0=1$.  For $k= 2, 3, \ldots$ we have 
\begin{equation} \label{eq:induction}
\begin{aligned}
&p_{k}=a_kp_{k-1}+p_{k-2}\\
&q_{k}=a_kq_{k-1}+q_{k-2}
\end{aligned}
\end{equation}

\item  The sequence  $\{\frac{p_k}{q_k}\}$ is convergent and we denote 
$$
[a_1, a_2, \ldots, a_n, \ldots]=\lim_{k\rightarrow \infty}\frac{p_k}{q_k}. 
$$
\item  Let $x:=[a_1, a_2, \ldots, a_n, \ldots]$ then
\begin{equation}
\label{eq:Dio}
\frac{1}{q_k(q_{k+1}+q_k)}<\left |x-\frac{p_k}{q_k}\right |<\frac{1}{q_kq_{k+1}}.
\end{equation}
\end{itemize}

We remark that most of the above properties follow directly from \cite[Chapter 1]{Khinchin}, but  there is an exceptional case for the left hand side of the estimate~\eqref{eq:Dio} when the numbers $a_n$ are not  integers.  We show a simper proof in the following. We suppose that $p_k/q_k$ is at the left side of $x$, similar argument woks for the right side as well. By \cite[p.6]{Khinchin} we have 
$$
\frac{p_k}{q_k}<\frac{p_{k+2}}{q_{k+2}}<x.
$$
Combining with \cite[p.6]{Khinchin} we derive 
\begin{equation}
\label{eq:ak+2}
\left | x-\frac{p_k}{q_k}\right |>\left | \frac{p_k}{q_k}-\frac{p_{k+2}}{q_{k+2}}\right |=\frac{a_{k+2}}{q_kq_{k+2}}.
\end{equation}
Applying~\eqref{eq:induction} and the condition  $a_n\ge 1$ we obtain 
$$
q_{k+2}/a_{k+2}\le q_{k+1}+q_{k}.
$$
Thus combining with \eqref{eq:ak+2} we obtain the desired  lower bound in~\eqref{eq:Dio}.



By \cite[Theorem 14]{Khinchin} if $\{a_n\}$ is a sequence of positive integers then $[a_1, a_2, \ldots]$ is an irrational number. We note that this  property is used in \cite{Baker} for showing the desired IFS  does not have  exact overlaps. For the  sequence $\{a_n\}$ with $a_n\in \P_\beta$ we have the following substitution which plays a similar  role to \cite{Baker}.

\begin{lemma}
\label{lem:irrational}
Let $\beta\ge 2$ be an algebraic number and  $\{a_n\}$ be a sequence with $ a_n\in \P_\beta, n\in \N$. Denote $p_k/q_k=[a_1, \ldots, a_k]$ and  consider $p_k, q_k$ as polynomials  with indeterminate $\beta$.  
Suppose that $p_k, q_k\in \cP(d_k, d_k)$ for some $d_k\in \N$ and 
\begin{equation}
\label{eq:ak+1}
a_{k+1}\ge M^{kd_k}d_k^M
\end{equation}
where $M$ is the same as in Lemma~\ref{lem:zero-or-1}. 
Then  $[a_1, a_2, \ldots] \notin \Q_\beta$.
\end{lemma}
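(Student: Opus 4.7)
The plan is to argue by contradiction. Suppose $x := [a_1, a_2, \ldots] \in \Q_\beta$ and write $x = f(\beta)/g(\beta)$ with $f, g \in \Z_\beta$ and $g(\beta) \neq 0$. Viewing $p_k$ and $q_k$ as integer-coefficient polynomials in the indeterminate $\beta$, I would introduce the auxiliary integer polynomial $P_k := f\, q_k - g\, p_k$, so that
$$
x - \frac{p_k}{q_k} \;=\; \frac{P_k(\beta)}{g(\beta)\, q_k}.
$$
The aim is to trap $|P_k(\beta)|$ between two incompatible bounds for infinitely many $k$: hypothesis \eqref{eq:ak+1} forces $q_{k+1}$ to be enormous and hence $|x - p_k/q_k|$ extremely tiny, while Lemma \ref{lem:zero-or-1} forces $|P_k(\beta)|$ to be not too small whenever it is nonzero. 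To produce infinitely many such $k$, the identity $p_{k+1}q_k - p_k q_{k+1} = (-1)^k$ (a one-line induction from \eqref{eq:induction}) gives $p_k/q_k \neq p_{k+1}/q_{k+1}$, so at most one of any two consecutive convergents can equal $x$; hence $P_k(\beta) \neq 0$ for infinitely many $k$.

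Next I would estimate the size of $P_k$ as a polynomial. Writing $D := \max\{\deg f, \deg g\}$ and letting $H$ bound the coefficients of $f$ and $g$, the hypothesis $p_k, q_k \in \cP(d_k, d_k)$ yields
$$
P_k \;\in\; \cP\bigl(d_k + D,\; C\, d_k\bigr), \qquad C := 2(D+1)H.
$$
For those $k$ with $P_k(\beta) \neq 0$, Lemma \ref{lem:zero-or-1} then gives
$$
|P_k(\beta)| \;\ge\; M^{-(d_k+D)} (C\, d_k)^{-M} \;=\; C_0\, M^{-d_k}\, d_k^{-M},
$$
where $C_0 := M^{-D} C^{-M}$ depends only on $\beta, f, g$. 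On the other hand, by \eqref{eq:Dio} together with $q_{k+1} \ge a_{k+1}\, q_k$ (from \eqref{eq:induction}),
$$
|P_k(\beta)| \;=\; |g(\beta)|\, q_k\, \bigl|x - p_k/q_k\bigr| \;<\; \frac{|g(\beta)|}{q_{k+1}} \;\le\; \frac{|g(\beta)|}{a_{k+1}\, q_k}.
$$

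Combining the two estimates yields $a_{k+1}\, q_k < C_0^{-1}|g(\beta)|\, M^{d_k}\, d_k^M$ for the infinitely many $k$ with $P_k(\beta) \neq 0$, and invoking \eqref{eq:ak+1} to replace $a_{k+1}$ by its lower bound gives
$$
q_k\, M^{(k-1)d_k} \;<\; C_0^{-1}|g(\beta)|,
$$
a fixed constant in $k$. Since $a_n \ge 1$ forces $q_{k+1} \ge q_k + q_{k-1}$ via \eqref{eq:induction} and hence $q_k \to \infty$, while $M \ge 1$ and $d_k \ge 1$, the left-hand side is unbounded along this subsequence, producing the desired contradiction. The only real subtlety I foresee is bookkeeping: cleanly separating the $k$-dependent factors $M^{d_k}$ and $d_k^M$ (coming from Lemma \ref{lem:zero-or-1}) from the constants depending only on the fixed data $f, g, \beta$, so that the overshoot $M^{(k-1)d_k}$ together with the growth of $q_k$ genuinely swamps them.
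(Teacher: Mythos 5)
Your proof is correct and follows essentially the same route as the paper: assume $x=f(\beta)/g(\beta)$, bound $|f(\beta)q_k-g(\beta)p_k|$ from above via \eqref{eq:Dio} and $q_{k+1}\ge a_{k+1}q_k$, from below via Lemma~\ref{lem:zero-or-1}, and contradict \eqref{eq:ak+1}. The only cosmetic difference is that you obtain $P_k(\beta)\neq 0$ for infinitely many $k$ from the determinant identity, whereas the paper reads it off for every $k$ from the strict left-hand inequality in \eqref{eq:Dio}; both suffice.
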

\begin{proof}
Assume to the contrary that $[a_1, a_2, \ldots] \in \Q_\beta$. Then there  exit $f(\beta), g(\beta)\in \Z_\beta$ with $g(\beta)\neq 0$ such that 
$$
\frac{f(\beta)}{g(\beta)}=[a_1, a_2, \ldots].
$$ 
For each $k\in \N$ by~\eqref{eq:induction} and~\eqref{eq:Dio} we have 
\begin{equation}
\label{eq:positive}
0<\left | f(\beta)q_k-g(\beta)p_k \right |<g(\beta)/q_{k+1}<g(\beta)/a_{k+1}.
\end{equation}
Note that we can regard $f, g$ as elements of $\cP(d, d)$ for some  integer $d$. Combining with the condition
$p_k, q_k\in \cP(d_k, d_k)$, there exists $C>0$ such that 
$$
f(\beta)q_k-g(\beta)p_k\in \cP(Cd_k, Cd_k).
$$
Applying Lemma~\ref{lem:zero-or-1} and  the non-zero condition in~\eqref{eq:positive}, we obtain
$$
|f(\beta)q_k-g(\beta)p_k|\ge M^{-Cd_k}(Cd_k)^{-M},
$$
which is contradict to \eqref{eq:ak+1} and~\eqref{eq:positive}.
\end{proof}

\begin{remark}
As we claimed before for some special algebraic numbers we have better lower bounds in Lemma \ref{lem:zero-or-1}, and hence  we can obtain weak condition of $a_{k+1}$ in the above Lemma \ref{lem:irrational} for these special algebraic numbers. 

Note that   there is  sequence $\{a_n\}$ with $a_n\in \P_\beta$ such that $[a_1, a_2, \ldots]\in \Q_\beta$. For instance 
for  $a_n=\beta, n\in \N$ we have 
$$
[\beta, \beta, \ldots]=\frac{1}{\beta+[\beta, \beta, \ldots]},
$$
and hence 
$$
[\beta, \beta, \ldots]=\frac{\sqrt{\beta^2+4}-\beta}{2}.
$$
Suppose $\sqrt{\beta^2+4}\in \Z$ then $[\beta, \beta, \ldots]\in \Q_\beta$. 
\end{remark}

We remark that the best approximate property of continued fractions (with positive integer letters)  is used in the construction of \cite{Baker} for finding the desired  parameters $s$ and $t$. The best approximate property  \cite[Chapter 2]{Khinchin} claims  that for any sequence of positive integers $\{a_n\}$, denote $s=[a_1, a_2, \ldots]$ and $p_k/q_k$ be its partial quotient, and for any integers $1\le q\le q_k, p\in \Z$ one has 
$$
\left |s-\frac{p}{q}\right |\ge \left |s-\frac{p_k}{q_k}\right |.
$$ 
Clearly, there is no such best approximate property  for the general real number   sequence $\{a_n\}$.  However, if the sequence  $\{a_n\} $ is a subset of $\P_\beta$ for some algebraic number $\beta$ then we have the following  Lemma~\ref{lem:best}  which  is sufficient for our application. 

For $a_1, a_2, \ldots, a_k\in \P_\beta$  denote 
\begin{equation*}
\cC[a_1, a_2, \ldots, a_k]=\{[a_1, \ldots, a_k, a_{k+1}, \ldots]:  a_j\in \cP_\beta, j\ge k+1 \}.
\end{equation*}

\begin{lemma}
\label{lem:best}
Let $a_1, \ldots, a_k\in \P_\beta$ and $n\in \N$. Then there exits $a_{k+1}\in \P_{\beta}$  such that for any $s\in \cC[a_1, \ldots, a_{k+1}]$ and any $f, g\in \cP(n, n)$ with $g(\beta)\neq 0$ we have 
$$
\left |s-\frac{f(\beta)}{g(\beta)}\right |\ge c>0,
$$
where the constant $c$ depends on  $a_1, \ldots, a_{k+1}$ and $n$. We stress that the constant $c$ will not depend on the specific  choices of $f$ and $g$.
\end{lemma}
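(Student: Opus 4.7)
The plan is to reduce to the finite set of ``forbidden'' values
\[
F=\left\{\frac{f(\beta)}{g(\beta)}:f,g\in\cP(n,n),\ g(\beta)\neq 0\right\},
\]
which is finite because $\cP(n,n)$ is itself a finite collection of polynomials (integer coefficients bounded by $n$, degree at most $n$). It will then suffice to choose $a_{k+1}\in\P_\beta$ so that the whole cylinder $\cC[a_1,\ldots,a_{k+1}]$ is disjoint from $F$; the required $c$ is just the minimum of finitely many positive distances.

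To locate $\cC[a_1,\ldots,a_{k+1}]$ explicitly I would use the standard formula
\[
[a_1,\ldots,a_{k+1},T]=\frac{Tp_{k+1}+p_k}{Tq_{k+1}+q_k}.
\]
Every $s\in\cC[a_1,\ldots,a_{k+1}]$ has this shape with $T=a_{k+2}+[a_{k+3},a_{k+4},\ldots]>1$, and since $T\mapsto s$ is a monotone M\"obius transformation one obtains $\cC[a_1,\ldots,a_{k+1}]\subset I_{a_{k+1}}$, where $I_{a_{k+1}}$ is the open interval bounded by $p_{k+1}/q_{k+1}$ (the limit $T\to\infty$) and the mediant $(p_{k+1}+p_k)/(q_{k+1}+q_k)$ (the value at $T=1$). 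The classical identity $|p_{k+1}q_k-p_kq_{k+1}|=1$ (a consequence of \eqref{eq:induction}) then yields $|I_{a_{k+1}}|=1/(q_{k+1}(q_{k+1}+q_k))$ and places $p_k/q_k$ on the opposite side of the mediant from $I_{a_{k+1}}$, at distance exactly $1/(q_k(q_{k+1}+q_k))>0$ from the nearer endpoint.

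To close the argument I would set $\delta:=\min\{|\alpha-p_k/q_k|:\alpha\in F\setminus\{p_k/q_k\}\}$ (with the convention $\delta=+\infty$ if this set is empty). Since $\beta\ge 2$, the recursion $q_{k+1}=a_{k+1}q_k+q_{k-1}$ forces $q_{k+1}\to\infty$ as $a_{k+1}$ runs through $\P_\beta$, so I may pick $a_{k+1}\in\P_\beta$ with $1/(q_kq_{k+1})<\delta/2$. For this $a_{k+1}$ the interval $I_{a_{k+1}}$ lies within $\delta/2$ of $p_k/q_k$, hence at distance at least $\delta/2$ from every $\alpha\in F\setminus\{p_k/q_k\}$; together with the gap $1/(q_k(q_{k+1}+q_k))$ from $p_k/q_k$ itself, the choice
\[
c=\min\!\left(\frac{\delta}{2},\frac{1}{q_k(q_{k+1}+q_k)}\right)
\]
is positive and depends only on $a_1,\ldots,a_{k+1}$ and $n$. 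The step I regard as most delicate is precisely the case $p_k/q_k\in F$: the intervals $I_{a_{k+1}}$ cluster on this forbidden point, and the only reason a uniform positive gap survives is the one-sided placement of $I_{a_{k+1}}$ relative to the mediant, which produces the explicit (small but strictly positive) separation $1/(q_k(q_{k+1}+q_k))$.
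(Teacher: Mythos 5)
Your proof is correct, and the overall skeleton matches the paper's: both split on whether the target rational $f(\beta)/g(\beta)$ equals the convergent $p_k/q_k$, both handle the equal case via the left-hand inequality of \eqref{eq:Dio} (your mediant computation just re-derives that inequality, which the paper already established for non-integer letters), and both handle the unequal case by a triangle inequality after choosing $a_{k+1}$ large enough. The genuine difference is the key input for the unequal case: the paper lower-bounds $|p_kg(\beta)-q_kf(\beta)|$ via Lemma~\ref{lem:zero-or-1}, the Garsia-type estimate for nonzero integer polynomials evaluated at the algebraic number $\beta$, whereas you observe that $\cP(n,n)$ is a finite set, so the set $F$ of admissible values $f(\beta)/g(\beta)$ is finite and the gap $\delta$ to $p_k/q_k$ is automatically positive. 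Your route is more elementary and does not use algebraicity of $\beta$ at all for this lemma (it would work for any real $\beta\ge 2$); the paper's route yields an explicit, uniform threshold for $a_{k+1}$ of the shape $M^{L}L^{M}M_1$, which is in the same spirit as Lemma~\ref{lem:irrational}, where the degree bound grows with $k$ and a finiteness argument would no longer give the uniformity needed. Since Lemma~\ref{lem:best} fixes $n$ and allows $c$ to depend on it, your finiteness argument fully suffices here.
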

\begin{proof}
Denote $\frac{p_k}{q_k}=[a_1, \ldots, a_k]$.
We proceed on a case-by-case basis depending   on  $\frac{p_k}{q_k}=\frac{f(\beta)}{g(\beta)}$ or not. First suppose that 
$$
\frac{p_k}{q_k}=\frac{f(\beta)}{g(\beta)}.
$$
Then by~\eqref{eq:Dio} we obtain 
\begin{equation}
\label{eq:eq}
\left| s-\frac{f(\beta)}{g(\beta)}\right |=\left| s-\frac{p_k}{q_k}\right |>\frac{1}{q_k(q_{k+1}+q_k)},
\end{equation}
thus this gives a positive lower bound. 

We now turn to the case $\frac{p_k}{q_k}\neq \frac{f(\beta)}{g(\beta)}$. Together with~\eqref{eq:Dio} we obtain
\begin{equation}\label{eq:cc}
\begin{aligned}
\left | s-\frac{f(\beta)}{g(\beta)}\right |
&\ge\left |  \frac{p_k}{q_k}-\frac{f(\beta)}{g(\beta)}  \right | - \left |  s-\frac{p_k}{q_k}\right | \\
&\ge  \frac{|p_k g(\beta)-q_kf(\beta)|}{|q_k g(\beta)|}   - \frac{1}{q_kq_{k+1}}.
\end{aligned}
\end{equation}
By~\eqref{eq:induction} we have  $p_k, q_k \in \Z_\beta$ and we consider $p_k, q_k$ as polynomials. Therefore, there exists a large integer $L$ which depends on $a_1, \ldots, a_k, n$ such that  
$$
p_k g(\beta)-q_kf(\beta)\in \cP(L, L).
$$ 
Clearly the condition $\frac{p_k}{q_k}\neq \frac{f(\beta)}{g(\beta)}$ implies 
$p_k g(\beta)-q_kf(\beta)\neq 0$. 
Thus by Lemma~\ref{lem:zero-or-1} there exits $M$ depending only on $\beta$  such that
\begin{equation}
\label{eq:upper}
\left | p_k g(\beta)-q_kf(\beta) \right |\ge M^{-L}L^{-M}.
\end{equation}
Moreover, there exists $M_1$ depending on  $a_1, \ldots, a_k, n$ such that 
\begin{equation}
\label{eq:M1}
q_k g(\beta)\le M_1.
\end{equation}

Let  $a_{k+1}\in \P_\beta$ be a large number  such that  
$
a_{k+1}>2 M^L L^M M_1,
$
then by \eqref{eq:induction} we have 
$$
(q_kq_{k+1})^{-1}\le a_{k+1}^{-1}\le M^{-L}L^{-M}M_1^{-1}/2.
$$
Combining with~\eqref{eq:cc}, ~\eqref{eq:upper} and  ~\eqref{eq:M1}, we obtain
$$
\left | s-\frac{f(\beta)}{g(\beta)}\right| \ge M^{-L}L^{-M}M_1^{-1}/2.
$$
Together with~\eqref{eq:eq} we obtain the desired result.
\end{proof}

\section{Proofs of Theorem~\ref{thm:main}}

We mimick the construction of Baker \cite{Baker} to our setting. Without loss of generality we assume that the sequence $\{\varepsilon_n\}$ is monotone decreasing. Otherwise we may consider the sequence $\{\varepsilon_n'\}$ where 
$
\varepsilon_n'=\min_{1\le i\le n} \varepsilon_i.
$

\subsection*{Initial construction}  
Let $a_1=1$ then by the definition of continued fractions we have   $p_1, q_1=1$.  Thus $p_1, q_1\in \cB_1$ where  $\cB_n, n\in \N$ is given as in \eqref{eq:Base-beta}. Let $N_1 \ge 2$ a natural number then there exists $a_{2}\in \P_\beta$  such that for  any $s\in \cC[a_1, a_{2}]$, by \eqref{eq:Dio}, we have 
\begin{equation}
\label{eq:initial-sS}
|s-1|=|q_1s-p_1|< 1/a_{2}\le  \varepsilon_{N_1}.
\end{equation} 
By \eqref{eq:induction} there exits $n\in \N$ such that 
\begin{equation}
\label{eq:initial-beta-base}
p_2, q_2\in \cB_n.
\end{equation}
Moreover, by taking sufficiently large $a_2\in \P_\beta$ and applying  Lemma \ref{lem:zero-or-1},  there exists  a positive $c_1$ such that for any $f, g, h\in \cP(1, 1)$ with $g(\beta)\neq 0$ one has 
\begin{equation}
\label{eq:initial-sL}
\left |s-\frac{f(\beta)}{g(\beta)}-\frac{h(\beta)}{g(\beta)}\right |\ge c_1.
\end{equation}

Let  $a_2\in \P_\beta$ be sufficiently large  such that the above~\eqref{eq:initial-sS},~\eqref{eq:initial-beta-base}, and~\eqref{eq:initial-sL} hold. Denote
\begin{equation*}
N_{2}=\min\{n: p_2, q_2\in \cB_n\}.
\end{equation*}

Now we turn to the construction of $t$. Let $a_1'=1$. For $\varepsilon_{N_2}$ and $c_1$ there exists $a_2'\in \P_\beta$ such that  for  any $t\in \cC[a_1', a_2']$ we have 
\begin{equation}
\label{eq:t-1}
|t-1|=|q_1's-p_1'|< 1/a_2'\le \min\{\varepsilon_{N_2}, c_1\}.
\end{equation} 

We remark that the above conditions~\eqref{eq:initial-sL},~\eqref{eq:t-1} and their induction versions~\eqref{eq:con2},~\eqref{eq:Nk+1} below  will be used to prove  that the final IFS $\Phi_{\beta, s, t}$ does not have exact overlaps. 

Moreover, we ask that $a_2$ and $a_2'$  are sufficiently large  such that they satisfy the condition~\eqref{eq:ak+1} of Lemma~\ref{lem:irrational} respectively.

\subsection*{Iterated construction}
Let $k\ge 2$. Suppose that  there are $a_2, \ldots, a_k$ and $a_2', \ldots, a_k'$ such that  
for any 
$$
s\in \cC[a_1, \ldots, a_k] \quad \text{and} \quad t\in \cC[a_1',  \ldots, a_k']
$$ 
we have 
\begin{equation}
\label{eq:con1}
\Delta_n(\beta, s, t)\leq \varepsilon_n, \quad \forall 1\le n\le N_k.
\end{equation}
Moreover, 
\begin{equation}
\label{eq:conk}
p_k, q_k \in \cB_{N_k} \quad \text{and}\quad p_k', q_k'\in \cB_{M_k}.
\end{equation}
Furthermore  there exists a positive number $c_{k-1}$ such that for any 
$$
s\in \cC[a_1, \ldots, a_{k}] \quad \text{and} \quad t\in \cC[a_1', \ldots, a_{k-1}']
$$ 
and  any $f, g, h \in \cP(k-1, k-1)$ with $g(\beta)\neq 0$ we have 
\begin{equation}
\label{eq:con2}
\left| s-\frac{p_{k-1}'f(\beta)}{q_{k-1}'g(\beta)} -\frac{h(\beta)}{g(\beta)} \right |\ge c_{k-1}, 
\end{equation}
and 
\begin{equation*}
\left |t-\frac{p_{k-1}'}{q_{k-1}'}\right |\le \frac{c_{k-1}}{k-1}.
\end{equation*}

Note that we may add further conditions $M_k\ge 2N_k$ and $N_{k}>M_{k-1}$ to make sure that $N_k$ tends to infinity as $k$ tends to infinity. Moreover, we ask that  $a_k$ and $a_k'$  are large enough such that they satisfy the condition \eqref{eq:ak+1} of Lemma~\ref{lem:irrational} respectively.

Under the above assumption, we now begin to choose $a_{k+1}$ and $a_{k+1}'$ such that the above claims  still hold for the case  $k+1$.  

For the number $\varepsilon_{M_k}$ there exists $a_{k+1}\in \P_{\beta}$ such that for any 
$$
s\in \cC[a_1, \ldots, a_{k+1}],
$$
by~\eqref{eq:Dio} we have 
\begin{equation}
\label{eq:Mk}
|q_k s-p_k|\le 1/a_{k+1}\le \varepsilon_{M_{k}}.
\end{equation}
Combining with Lemma \ref{lem:Delta} and the condition~\eqref{eq:conk} we derive that 
\begin{equation}
\label{eq:Mkk}
\Delta_n(\beta, s, t)\le \beta^{-n} |q_ks-p_k|\le \varepsilon_{M_k}, \quad \forall N_k\le n\le M_k.
\end{equation}

Applying \eqref{eq:induction} and  the assumption $p_k, q_k\in \cB_{N_k}$, and by taking some large enough $a_{k+1}\in \P_\beta$, there exists a natural number $n>N_k$ such that  
\begin{equation}
\label{eq:it-Bn}
p_{k+1}, q_{k+1}\in \cB_{n}.
\end{equation}

Furthermore, by Lemma~\ref{lem:best} there exits   $a_{k+1}\in \P_{\beta}$ such that for any 
$$
s\in \cC[a_1, \ldots, a_{k+1}],
$$
and any $f, g, h\in \cP(k, k)$ with $g(\beta)\neq 0$ one has 
\begin{equation}\label{eq:sL}
\left |s-\frac{f(\beta)}{g(\beta)}\frac{p_k'}{q_k'}-\frac{h(\beta)}{g(\beta)}\right |\ge c_k>0,
\end{equation}
where $c_k$ depends on $\beta, k$ and  $a_2, \ldots, a_{k+1}, a_2', \ldots, a_k'$. 

We take large enough $a_{k+1}\in \P_\beta$ such that the above~\eqref{eq:Mk},~\eqref{eq:it-Bn} and~\eqref{eq:sL} still hold with respect to $a_{k+1}$. Denote 
\begin{equation*}
N_{k+1}=\min\{n\in \N: p_{k+1}, q_{k+1}\in \cB_n\}.
\end{equation*}

For $\varepsilon_{N_{k+1}}$ and $c_k$ there exists $a_{k+1}'\in \P_{\beta}$ such that for any 
$$
t\in \cC[a_1', \ldots, a_{k+1}'],
$$
by \eqref{eq:Dio} we have 
\begin{equation}
\label{eq:Nk+1}
|q_k't-p_k'|\le 1/a_{k+1}'\le \min\{\varepsilon_{N_{k+1}}, c_k/k\}.
\end{equation}
Combining with Lemma \ref{lem:Delta} and the condition~\eqref{eq:conk} we obtain 
\begin{equation}
\label{eq:Nk+1'}
\Delta_n(\beta, s, t)\le \beta^{-n}|q_k'-p_k'|\le \varepsilon_{N_{k+1}}, \quad \forall  M_k\le  n\le N_{k+1}.
\end{equation}

Moreover, by~\eqref{eq:induction} and the assumption $p_k', q_k'\in \cB_{M_k}$, by taking large enough $a_{k+1}'\in \P_\beta$ we obtain
$p_{k+1}', q_{k+1}'\in \cB_{n}$ for some $n\in \N$. Denote 
$$
M_{k+1}=\min\{n\in \N: p_{k+1}', q_{k+1}'\in \cB_n\}.
$$

Furthermore,  we ask that  $a_k$ and $a_k'$ are  sufficiently large  such that they satisfy the condition~\eqref{eq:ak+1} of Lemma~\ref{lem:irrational} respectively.

By~\eqref{eq:con1}, ~\eqref{eq:Mkk} and  ~\eqref{eq:Nk+1'} we derive that for any 
$$
s\in \cC[a_1, \ldots, a_{k+1}] \quad \text{and} \quad t\in \cC[a_1',  \ldots, a_{k+1}']
$$
we have 
$$
\Delta_n(\beta, s, t)\le \varepsilon_n, \quad \forall 1\le n\le N_{k+1}.
$$

By iterating  the above arguments, there exit two sequences $\{a_k\}$ and $\{a_k'\}$ such that  
\begin{equation*}
s=[a_1, \ldots, a_n, \ldots] \notin \Q_\beta \quad \text{and} \quad t=[a_1', \ldots, a_n', \ldots]\notin \Q_\beta,
\end{equation*}
and 
\begin{equation*}
\Delta(\beta, s, t)\le \varepsilon_n, \quad \forall n\in \N.
\end{equation*}
Moreover  for each $k\in \N$ and any $f, g, h\in \cP(k, k)$ with $g(\beta)\neq 0$ we have 
\begin{equation}
\label{eq:allk}
\left |s-\frac{f(\beta)}{g(\beta)}\frac{p_k'}{q_k'}-\frac{h(\beta)}{g(\beta)}\right |\ge c_k>0,
\end{equation}
where $c_k$ depends on $\beta, k$ and  $a_2, \ldots, a_{k+1}, a_2', \ldots, a_k'$. 
Furthermore  by \eqref{eq:Nk+1} we obtain
\begin{equation}
\label{eq:t}
\left |t-\frac{p_k'}{q_k'}\right |\leq c_k/k.
\end{equation}

\subsection*{No exact overlaps} The following arguments are due to \cite{Baker}.  For completeness we show the  arguments here.

Let $s, t$ be as in the above construction. Assume to the contrary  that $\Phi_{\beta, s, t}$ has exact overlap. Then by Lemma~\ref{lem:non} there are  $f(\beta), g(\beta)\in \Z_\beta\setminus\{0\}$ and $h(\beta)\in \Z_\beta$ such that 
\begin{equation*}
s=\frac{f(\beta)}{g(\beta)}t+\frac{h(\beta)}{g(\beta)}.
\end{equation*}
Combining with~\eqref{eq:t}, for each $k$ we have 
\begin{equation}
\label{eq:R}
\left | s-\frac{f(\beta)}{g(\beta)}\frac{p_k'}{q_k'}-\frac{h(\beta)}{g(\beta)} \right |\le \left |\frac{f(\beta)}{g(\beta)}\right | \left |t-\frac{p_k'}{q_k'}\right |\le \frac{c_k|f(\beta)|}{k|g(\beta)|}.
\end{equation}
Other the other hand, we could consider $f(\beta), g(\beta), h(\beta)$ as polynomials and they belongs to $\cP(k, k)$ for all large enough $k$. Therefore,  by~\eqref{eq:allk} we obtain 
$$
\left | s-\frac{f(\beta)}{g(\beta)}\frac{p_k'}{q_k'}-\frac{h(\beta)}{g(\beta)} \right |\ge c_k,
$$
which is contradict to~\eqref{eq:R} when $k$ is large enough.

\section*{Acknowledgement}

It is my pleasure to thank the members of the online reading group on  fractal geometry in the  department of mathematics. In particular, I am grateful to De-Jun Feng for the very careful reading of the manuscript and many valuable suggestions. This work  was supported  by HKRGC GRF Grants  CUHK14301218 and CUHK14304119.


\end{document}